\newtheorem{theorem}{Theorem}[section]
\newtheorem{lemma}[theorem]{Lemma}
\newtheorem{proposition}[theorem]{Proposition}				
\newtheorem{definition}[theorem]{Definition}
\definecolor{mycolor}{RGB}{85,125,250}
\definecolor{gcolor}{RGB}{0.01,0.199,0.1}
\tikzset{declare function = {
                aux(\x)= (\x <= 0) * (0) +
                          and(\x > 0, \x < 1) * ((3*(\x-1)^2-2*(\x-1)^3)/( 3*(\x-1)^2-2*(\x-1)^3 + (3*\x^2-2*\x^3))) +
                          (\x >= 1) * (1)
               ;
               v(\x)= (abs(\x) <= 1/3) * (1) +
                          and(abs(\x) >= 1/3, abs(\x) <= 2/3) * (cos(pi/2)*aux(3*abs(\x)-1))    +
                          (abs(\x) >= 2/3) * (0)
               ;
               left(\x)= (\x >= -1/3) * (0) +
                          and(\x <= -1/3, \x >= -2/3) * (cos(pi/2)*v(3*abs(\x)-1))    +
                          (\x <= -2/3) * (0)
               ;
               right(\x)= (\x <= 1/3) * (0) +
                          and(\x >= 1/3, \x <= 2/3) * (cos(pi/2)*v(3*abs(\x)-1))    +
                          (\x >= 2/3) * (0)
               ;
               mid(\x)= (abs(\x) <= 1/3) * (1) + (abs(\x) >= 1/3) * (0)
               ;
               left_adap(\x)= (\x >= -1/3) * (0) +
                          and(\x <= -1/3, \x >= -2/3) * (cos(pi/2)*v(3*abs(\x)-1))    +
                          (\x <= -2/3) * (0)
               ;
               right_adap(\x)= (\x <= 1/3) * (0) +
                          and(\x >= 1/3, \x <= 2/3) * (cos(pi/2)*v(3*abs(\x)-1))    +
                          (\x >= 2/3) * (0)
               ;
               mid_adap(\x)= (abs(\x) <= pi/18) * (1) + (abs(\x) >= pi/18) * (0)
               ;
               }}
\newcommand{\abs}[1]{\lvert#1\rvert}
\newcommand{\norm}[1]{\lVert#1\rVert}
\newcommand{\Ao}{\mathbf{A}}
\newcommand{\Fo}{\mathbf{F}}
\newcommand{\Ko}{\mathbf{K}}
\newcommand{\Ro}{\mathcal{R}}
\newcommand{\Sp}{\mathds S}
\newcommand{\regfilt}{\Phi}
\newcommand{\Y}{\mathds Y}
\newcommand{\Vo}{\mathcal{V}}
\newcommand{\qsv}{\kappa}
\newcommand{\Bo}{\mathcal{B}}
\newcommand{\Io}{\mathcal{I}}
\newcommand{\sparen}[1]{\{#1\}}		      
\newcommand{\paren}[1]{(#1)}
\DeclareMathOperator{\ran}{ran}
\newcommand{\inner}[1]{\left\langle#1\right\rangle}
\newcommand{\reg}{\mathcal{R}}
\newcommand{\signal}{f}
\newcommand{\data}{g}
\newcommand{\C}{\mathbb{C}}
\newcommand{\R}{\mathbb{R}}
\newcommand{\N}{\mathbb{N}}
\newcommand{\Z}{\mathbb{Z}}
\newcommand{\dom}{\mathcal{D}}
\newcommand{\al}{\alpha}
\newcommand{\la}{\lambda}
\newcommand{\La}{\Lambda}
\newcommand*\diff{\mathop{}\!\mathrm{d}}
\DeclareMathOperator{\supp}{supp}
\colorlet{lred}{red!40}
\colorlet{lgreen}{green!40}
\colorlet{lblue}{blue!40}
\definecolor{bananamania}{rgb}{0.98, 0.91, 0.71}
\numberwithin{equation}{section}
\numberwithin{theorem}{section}
\numberwithin{figure}{section}
\title{Translation invariant diagonal frame decomposition for the Radon transform}
\date{August 7, 2023}
\author{Simon Göppel}
\affil{Department of Mathematics, University of Innsbruck\authorcr
Technikerstrasse 13, 6020 Innsbruck, Austria\authorcr
E-mail:  \texttt{simon.goeppel@uibk.ac.at}
 }
 \author{Markus Haltmeier}
\affil{Department of Mathematics, University of Innsbruck\authorcr
Technikerstrasse 13, 6020 Innsbruck, Austria\authorcr
E-mail:  \texttt{markus.haltmeier@uibk.ac.at}
 }
\author{Jürgen Frikel}
\affil{Department of Computer Science and Mathematics, OTH Regensburg\authorcr Galgenbergstra{\ss}e 32, 93053 Regensburg, Germany\authorcr
E-mail:  \texttt{juergen.frikel@oth-regensburg.de}
 }
\begin{document}

\maketitle

\begin{abstract} 
In this article, we address the challenge of solving the ill-posed reconstruction problem in computed tomography using a translation invariant diagonal frame decomposition (TI-DFD). First, we review the concept of a TI-DFD for general linear operators and the  corresponding filter-based regularization. We then introduce the TI-DFD for the Radon transform on $L^2(\R^2)$ and provide an exemplary construction using the TI wavelet transform.
Presented numerical results clearly demonstrate the benefits of our approach over non-translation invariant counterparts.

\medskip\noindent \textbf{Keywords:}  
DFD, translation invariance, Radon transform,  inverse problems, regularization

\end{abstract}

\section{Introduction}
Computed tomography (CT) is a widely applied imaging modality in medicine and industry, where the underlying mathematical model is the Radon transform. For a function or signal $\signal \colon \R^2 \to \R$, the Radon transform is denoted by $\Ro \signal \colon \Sp^1 \times \R \to \R$.
It is well-known, that reconstructing a function $\signal$ from approximate knowledge of its line integrals amounts to an ill-posed inverse problem of the form $\data^\delta = \Ro \signal + \eta$ \cite{natterer}. Here, $\delta >0$ is some noise level and $\eta$ denotes the data distortions with $\norm{\eta}_2\leq \delta$. In particular, this means that inverting the Radon transform is unstable and that small perturbations in data can cause big reconstructions errors.

Classical filter based regularization is a well-known stabilization concept. Assuming a singular value decomposition (SVD) $\Ro \signal = \sum_{n\in\N} \sigma_n \inner{\signal, u_n} v_n$ for the Radon transform (in an appropriate function space setting \cite{louis1984,maass1991,QUINTO1983437,davison1981}), a regularized reconstruction is given by  $\signal_\alpha^\delta = \sum_{n\in\N} \regfilt_\alpha(\sigma_n) \inner{\data^\delta, v_n}u_n$, depending on a so-called regularizing filter $\regfilt_\alpha$, see  \cite{engl1996regularization,ebner2020regularization}. However, filtering based on the SVD comes with several shortcomings. In general, for an arbitrary linear operator, the SVD might be hard to compute numerically or not be known analytically. Additionally, the basis elements $u_n, v_n$ are only adapted to the operator itself, but not the underlying signal class of interest.

To overcome these limitations, the so-called diagonal frame decomposition (DFD) as a generalization of the SVD has been studied thoroughly in the recent years. In particular, DFDs are better suited as they not only can be adjusted to the underlying application and involved signals, but often also provide efficient implementations.  A prominent example of a DFD for the Radon transform is the wavelet-vaguelette decomposition (WVD) introduced in \cite{DONOHO1995101}.  Related construction involving curvelets and shearlets can for example be found in \cite{candes2002, COLONNA2010232}. A more general analysis of regularization properties and convergence results have been presented in \cite{ebner2020regularization, hubmer2022regularization, frikel2020frame}.

One drawback of the classical WVD reconstruction approach is that in general it lacks translation invariance, which can lead to well-known wavelet artifacts in the reconstruction \cite{mallat}. Translation invariant systems on the other hand are known to perform better in that regard for simple tasks such as denoising  \cite{nason1995stationary, coifman1995tidenoising} and have also been studied recently for a variational approach \cite{parhi2023cycle}. To overcome this,  in \cite{goeppel2022tidfd} the authors have introduced the concept of the translation invariant diagonal frame decomposition (TI-DFD) for general linear operators. Along with an analysis of the regularization properties of the filter based TI-DFD, the authors gave an exemplary construction of a TI-WVD regarding stable differentiation. These findings indicate improved regularization properties, when translation invariance is restored in the underlying wavelet system. Thus, the goal of the present paper is to construct a TI-WVD for the Radon transform. This way, we obtain an explicit filter based regularization strategy, which can be implemented efficiently. We will demonstrate its improved regularization properties by comparing the results to the classical WVD in a numerical example.\\

\noindent \textbf{Notation.} For $f\in L^2(\R^2)$, the Fourier transform is denoted by $\widehat f = \Fo f$, where $\widehat f (\xi) \coloneqq \int_{\R^2} f(x) e^{-i\inner{\xi, x}}$ if $f \in L^2(\R^d) \cap L^1(\R^d)$.  For $f \in L^2(\Sp \times \R)$, the one dimensional Fourier transform in  the second variable is be denoted by $\Fo_2$. Furthermore, we write $u^* (x) \coloneqq \overline{u(-x)}$, where $\overline{z}$ is the complex conjugate of $z \in \C$.

\section{The Translation Invariant Diagonal Frame Decomposition}
\label{sec:TI-DFD}

In this section, we recall the concept of translation invariant (TI)  frames  \cite{mallat} and  translation invariant diagonal frame decompositions (TI-DFDs) of linear operators \cite{goeppel2022tidfd}. Furthermore, we recall the concept of filtered regularization using  TI-DFDs.\\ 

\begin{definition}[TI-frame] \label{def:ti-frame}
Let $\Lambda$ be an at most countable index set. We call the family $ (u_\la)_{\la  \in \La} \in L^2(\R^d)^\La$  a translation invariant frame (TI-frame) for $L^2(\R^d)$ if for all $\la\in\La$ we have $\hat u_\la \in L^\infty(\R^d) $ and there exist constants $A,B>0$, such that
\begin{equation}\label{eq:TI-frame}
\forall f\in L^2(\R^d) \colon \quad A \norm{\signal}_2^2 \leq \sum_{\la  \in \La} \norm{ u^\ast_\la \ast \signal}_2^2 \leq B  \norm{\signal}_2^2  \,.
\end{equation}
We call a TI-frame  $ (u_\la)_{\la  \in \La}$ tight if \eqref{eq:TI-frame} holds with TI-frame founds $A=B=1$.
\end{definition}

Defining $w_\la \coloneqq \Fo^{-1} \paren{2\pi \widehat u_\la / \sum_{\mu \in \La} \abs{\widehat u_\mu}^2}$ for every TI frame element $u_\la$, we obtain the so-called canonical dual $(w_\la)_{\la\in\La}$ of $(u_\la)_{\la\in\La}$. It holds that
\begin{equation}\label{eq:reproduction formula}
    \forall \signal \in L^2 (\R^d) \colon \quad \signal = \sum_{\la\in\La} w_\la \ast \paren{u_\la^* \ast \signal}.
\end{equation}
Note that \eqref{eq:reproduction formula} in fact holds true for any dual frame $(w_\la)_{\la\in\La}$ defined by the property $\sum_{\la} \paren{\Fo w_\la} \cdot \paren{\overline{\Fo u_\la}} = 2\pi$. In particular, the canonical dual always exists but it not  uniquely defined by \eqref{eq:reproduction formula}. Further note  that TI-frame is not a frame in the classical sense. In particular, TI frame coefficients $u_\la^* \ast \signal (x) = \inner{\signal, u_\la (\cdot - x)}$ use a continuous  translation parameter $x$.\\

\begin{definition}[TI-DFD]\label{def:ti-dfd} Let $\Ko \colon \dom (\Ko) \subseteq L^2(\R^d) \to \Y$ be a closed linear operator,  where $\Y$ is a Hilbert space. We denote the of bounded operators between $\Y$ and $L^2(\R^d)$ by $B(\Y, L^2(\R^d))$. The system $ \paren{u_\la, \Vo^*_\la, \qsv_\la}_{\la  \in \La}$ is called a translation invariant frame decomposition (TI-DFD) for $\Ko$, if the following properties hold:
\begin{enumerate}[itemindent=1.5em, label=(TI\arabic*), topsep=0em]
\item \label{ti1} $ \paren{u_\la}_{\la  \in \La} \in L^2(\R^d)^\Lambda$ is a TI-frame for $L^2(\R^d)$.
\item \label{ti2} $\forall \la \in \La$ we have $ \Vo^*_\la \in  B(\Y, L^2(\R^d))$ and
\begin{equation*}
\forall g \in \overline{\ran{\Ko}} \colon \sum_{\la  \in \La} \norm{\Vo^*_\la \data}_2^2 \asymp \norm{\data}_{\Y}^2 \,.
\end{equation*}
\item \label{ti3} $\forall \la \in \La \colon  \qsv_\la \in (0, \infty)$ and
\begin{equation*}
\forall \signal\in \dom(\Ko) \colon  \Vo^*_\la (\Ko \signal)   = \qsv_\la \, (u^\ast_\la \ast \signal ).
\end{equation*}
\end{enumerate}
\end{definition}

Here, we define $F \asymp G \vcentcolon\Leftrightarrow \exists c_1,c_2 > 0 \colon c_1 G \leq F \leq c_2 G $.\\

The TI-DFD decomposes an operator into translation-invariant components, splitting the problem of recovering $\signal$ from $\Ko \signal = \data$ into several subproblems indexed by $\la$. Here, $(\Vo_\la^* \data)_{\la\in\La}$ are the coefficient functions given in the data domain, where \ref{ti2} ensures that this decomposition is stable in both directions. Property (TI3) states that the coefficient functions of the original signal can be recovered from the data with stability given by $\kappa_\lambda$. Together with \eqref{eq:reproduction formula} the definition of a TI-DFD immediately gives rise to the reproducing formula
\begin{equation}\label{eq:ti dfd recon}
\signal = \sum_{\la\in\La} w_\la \ast (\kappa_\la^{-1} \cdot \Vo_\la^* \data),
\end{equation}
for all $\signal \in \dom (\Ko)$ and $\data = \mathcal{K} \signal$. Note that the instability of inverting $\Ko$ is reflected via the quasi-singular values $(\kappa_\la)_{\la\in\La}$.  More precisely, in \cite{goeppel2022tidfd} it has been shown that the inverse operator $\Ko^{-1}$ is unbounded if and only if the quasi-singular values $(\kappa_\la)_{\la\in\La}$ accumulate at zero, assuming $\inf_\la \norm{\Vo_\la^*}_{\mathrm{op}} > 0$. \\

\begin{definition}[Regularizing filter] \label{def:regfilt} A family $(\regfilt_\al)_{\alpha>0}$ of piecewise continuous functions $\regfilt_\al \colon (0, \infty) \to \R$ is called a regularizing filter if the following hold:
\begin{enumerate}[itemindent=2em, leftmargin=1em,  label=(F\arabic*), topsep=0em]
\item\label{def:regfilt1}  $\forall \alpha >  0 \colon \norm{\regfilt_\al}_\infty < \infty$.
\item\label{def:regfilt2} $\exists C > 0 \colon \sup \{\abs{\qsv \regfilt_\al(\qsv) } \colon \alpha > 0 \wedge \qsv  \geq 0\} \leq C$.
\item\label{def:regfilt3} $\forall \qsv \in (0, \infty) \colon \lim_{\alpha \to 0} \regfilt_\al (\qsv) = 1/\qsv$.\\
\end{enumerate}
\end{definition}

The following theorem summarizes main results of \cite[Sections 2 and 3]{goeppel2022tidfd}.
For that recall  the notion of a regularization method \cite[Definition 3.1]{engl1996regularization}.\\

\begin{theorem}
Let $(u_\la,\Vo^*_\la, \qsv_\la)_{\la \in \La}$ be a TI-DFD for $\Ko$, let $(w_\la)_{\la \in \La}$ be a dual TI-frame for  $(u_\la)_{\la \in \La}$ and \begin{equation} \label{eq:ti-filt}
\reg^\regfilt_\al  g \coloneqq  \sum_{\lambda\in\Lambda}  w_\lambda \ast ( \regfilt_\al (\qsv_\lambda)  \cdot  (\Vo^*_\la g ) )
\end{equation}
where $(\regfilt_\al)_{\alpha>0}$ is a regularizing filter. Then we have:
\begin{enumerate}
\item  \label{eq:ti-recon}
$\forall \data \in \ran(\Ko) \colon  \quad \Ko^{-1}\data = \sum_{\lambda\in\Lambda}   w_\lambda  \ast ( \qsv_\lambda^{-1} \cdot (\Vo^*_\la \data) )$.
\item The family $(\reg^\regfilt_\al)_{\alpha>0}$  together with suitable parameter choice, defines a regularization method for inverting $\Ko$.\\
\end{enumerate}
\end{theorem}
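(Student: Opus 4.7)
The statement has two parts; I would prove them in order, leveraging the reproduction formula \eqref{eq:reproduction formula} and the three filter axioms \ref{def:regfilt1}--\ref{def:regfilt3}.

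For part (1), the idea is to substitute $g = \Ko \signal$ for $\signal = \Ko^{-1} g \in \dom(\Ko)$ and use \ref{ti3}. Namely, $\qsv_\la^{-1} \Vo^*_\la g = \qsv_\la^{-1} \Vo^*_\la \Ko \signal = u_\la^\ast \ast \signal$ by (TI3). Summing over $\la$ and inserting into the right-hand side, the claim reduces to the reproduction formula \eqref{eq:reproduction formula} for $\signal$ with respect to the dual pair $(u_\la, w_\la)_{\la \in \La}$. The only technical point is justifying the $L^2$-convergence of the series, which follows from property \ref{ti1} and the dual-frame identity $\sum_\la (\Fo w_\la)\overline{\Fo u_\la} = 2\pi$.

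For part (2), I would verify the three defining properties of a regularization method. First, boundedness of $\reg^\regfilt_\al$: combining \ref{ti2} applied to $g \in \overline{\ran(\Ko)}$ with Parseval-type estimates for the TI-frame $(w_\la)_{\la \in \La}$, the norm $\|\reg^\regfilt_\al g\|_2^2$ is controlled by $\sup_\la |\regfilt_\al(\qsv_\la)|^2 \sum_\la \|\Vo^*_\la g\|_2^2$; by \ref{def:regfilt1} this gives a (possibly $\alpha$-dependent) bound. Second, pointwise convergence on $\ran(\Ko)$: for $g = \Ko \signal$, (TI3) rewrites the summand as $w_\la \ast ((\regfilt_\al(\qsv_\la)\qsv_\la) \cdot (u_\la^\ast \ast \signal))$, and since $\regfilt_\al(\qsv_\la)\qsv_\la \to 1$ as $\alpha \to 0$ by \ref{def:regfilt3} while staying uniformly bounded by $C$ thanks to \ref{def:regfilt2}, dominated convergence (in the $\ell^2(\La; L^2)$ sense provided by the TI-frame inequality) together with part (1) gives $\reg^\regfilt_\al g \to \Ko^{-1} g$. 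Third, the stability-approximation split: for noisy $g^\delta$ with $\|g - g^\delta\|_\Y \leq \delta$, bound $\|\reg^\regfilt_\al g^\delta - \Ko^{-1} g\|_2 \leq \|\reg^\regfilt_\al (g^\delta - g)\|_2 + \|\reg^\regfilt_\al g - \Ko^{-1} g\|_2$; the first term is $\le c\, \|\regfilt_\al\|_\infty \delta$ by the boundedness estimate, the second tends to $0$ as $\alpha \to 0$. An a priori choice $\alpha(\delta) \to 0$ with $\|\regfilt_{\alpha(\delta)}\|_\infty \delta \to 0$ then produces the desired regularization.

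\textbf{Main obstacle.} The nontrivial step is the pointwise convergence in the second item, because it requires interchanging the limit $\alpha \to 0$ with an infinite sum over $\la \in \La$. The uniform bound $|\qsv \regfilt_\al(\qsv)| \leq C$ from \ref{def:regfilt2} is exactly what makes the dominated convergence argument work, so the proof hinges on combining this filter axiom with the norm equivalence from \ref{ti2} to obtain a summable $L^2$-majorant. Once this is in place, the other items reduce to routine estimates.
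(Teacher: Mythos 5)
The paper does not prove this theorem at all: it is explicitly presented as a summary of results from the cited reference \cite{goeppel2022tidfd}, so there is no in-paper proof to compare against. Your sketch follows exactly the standard filter-regularization template that this theorem packages: part (1) via (TI3) plus the reproduction formula \eqref{eq:reproduction formula}, and part (2) via the stability/approximation split with dominated convergence in $\ell^2(\La;L^2)$ powered by \ref{def:regfilt2} and \ref{def:regfilt3}. This is the right argument and the key steps are correctly identified. One small point you should make explicit if you write it out: the upper bound in \ref{ti2} is only stated for $g\in\overline{\ran(\Ko)}$, so for the noise term $\reg^\regfilt_\al(g^\delta-g)$ you either need to project the data onto $\overline{\ran(\Ko)}$ first or argue that the bound extends to all of $\Y$; likewise, the synthesis estimate $\norm{\sum_\la w_\la\ast c_\la}_2\lesssim (\sum_\la\norm{c_\la}_2^2)^{1/2}$ uses that the dual system $(w_\la)_{\la\in\La}$ is itself a TI-frame with its own upper bound, which is worth stating rather than leaving implicit.
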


\section{TI-DFD for the Radon Transform}
\label{sec:inversion of radon via ti-dfd}

Recall that for $f \in L^1(\R^2) \cap L^2(\R^2) $ the Radon transform $\Ro f \colon \Sp^1 \times \R \to \R$ is defined by
\begin{equation}\label{eq:R0}
\Ro \signal\paren{\theta, s} = \int_\R \signal \paren{s\theta + t \theta^\perp} \diff t \,,
\end{equation}
for almost  every  $ \paren{\theta, s} \in \Sp^1\times\R$.  In this section we extend the Radon transform to a closed operator between $L^2$ spaces  \cite{smith1977practical} and then construct corresponding TI-DFDs.

\subsection{The Radon transform on $L^2$}
\label{sec:R}
In this section, we introduce the Radon transform as an operator on between $L^2$-spaces. Proofs of the stated properties and a detailed discussion can be found in \cite{smith1977practical}. In what follows, we will make extensive use of the Fourier Slice theorem \cite{natterer} which states that for all  $f \in L^1(\R^2) \cap L^2(\R^2) $  and almost every  $(\theta, \sigma) \in \Sp^1 \times \R$ we have
\begin{equation}\label{eq:fst}
\Fo_2 \Ro \signal (\theta, \sigma) = \Fo \signal (\sigma \theta)  \,.
\end{equation}
In fact we use an extension of \eqref{eq:fst}  to the  natural domain of definition $\dom (\Ro) \supsetneq L^1(\R^2) \cap L^2(\R^2) $. To this end we define the operator $\Bo \colon \dom (\Bo) \subseteq L^2 (\R^2) \to L^2 (\Sp^1 \times \R)$ by
\begin{equation}\label{eq:B}
 	\forall  (\theta, \sigma) \in \Sp^1 \times \R
	\colon \quad
	\Bo f \paren{\theta, \sigma}\coloneqq  \widehat f \paren{\sigma \theta}
\end{equation}
According to \eqref{eq:fst}, $\Bo$ is the Fourier representation of the Radon transform on  $L^1(\R^2) \cap L^2(\R^2) $. \\

\begin{proposition}[Properties of $\Bo$]
\label{prop:B}
The operator $\Bo$ as defined above satisfies the following:
\begin{enumerate}[label=(\alph*)]
\item\label{prop3-B} $\dom \paren{\Bo} = \sparen{ f \colon  \norm{\cdot}_2^{-1/2}   f \in L^2 (\R^2)}$
\item\label{prop4-B} $\dom (\Bo^*) = \left\{ g \colon \vert \cdot \vert^{-1/2} g(\theta,\cdot) \in L^2(\Sp^1 \times \R) \right\}$
\item\label{prop5-B} $\dom(\Bo)$ and $\dom (\Bo^*)$ are dense in $L^2$, respectively.
\item\label{prop1-B} $\Bo$ is well-defined, linear, injective and unbounded.
\item\label{prop2-B} $
\Bo^* g(\xi) =\Vert \xi \Vert_2^{-1} g \left( \xi/ \Vert \xi \Vert_2, \Vert \xi \Vert_2 \right)$.\\
\end{enumerate}
\end{proposition}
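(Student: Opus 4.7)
The plan is to derive all five properties from a single polar-coordinate identity. Working first on Schwartz functions (where $\widehat f$ is continuous and pointwise evaluation is legitimate) and then extending by density, I would establish
\[
\norm{\Bo f}_{L^2(\Sp^1 \times \R)}^2
= \int_{\Sp^1}\int_\R \abs{\widehat f(\sigma \theta)}^2 \diff\sigma \diff\theta
= 2 \int_{\R^2} \abs{\widehat f(\xi)}^2 \norm{\xi}_2^{-1} \diff\xi,
\]
where the factor $2$ comes from the $2$-to-$1$ covering $(\theta,\sigma)\mapsto \sigma\theta$ (each $\xi$ has preimages $(\pm \xi/\norm{\xi}_2,\pm\norm{\xi}_2)$) and the Jacobian of polar coordinates supplies the weight $\norm{\xi}_2^{-1}$. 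Part \ref{prop3-B} is then immediate (the stated condition is understood up to the Fourier transform via Plancherel).

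For \ref{prop2-B}, I would redo the same change of variables inside the sesquilinear pairing $\inner{\Bo f,g}$: converting $(\theta,\sigma)$ back to $\xi = \sigma\theta$ isolates the weight $\norm{\xi}_2^{-1}$ and yields the explicit adjoint $\Bo^* g(\xi) = \norm{\xi}_2^{-1} g(\xi/\norm{\xi}_2,\norm{\xi}_2)$. Computing $\norm{\Bo^* g}_2^2$ and reversing the coordinate change then yields \ref{prop4-B}. For \ref{prop5-B}, density follows by Fourier-side truncation: approximating $f \in L^2(\R^2)$ by functions whose Fourier transform is supported in a shell $\varepsilon \leq \norm{\xi}_2 \leq R$ produces a sequence in $\dom(\Bo)$ (the weight $\norm{\xi}_2^{-1}$ is bounded there), and analogously cutting off $g(\theta,\cdot)$ near $\sigma = 0$ produces elements of $\dom(\Bo^*)$. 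For \ref{prop1-B}, linearity is immediate from the definition; injectivity follows since $\Bo f = 0$ forces $\widehat f(\sigma\theta)=0$ for a.e.\ $(\theta,\sigma)$, whence $\widehat f = 0$ a.e.\ on $\R^2$ by polar Fubini and hence $f=0$; unboundedness follows by picking a sequence $f_n$ whose Fourier mass concentrates near the origin, so that $\norm{\Bo f_n}_2/\norm{f_n}_2 \to \infty$ because of the weight $\norm{\xi}_2^{-1}$.

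The main obstacle is conceptual rather than computational: for $f \in L^2(\R^2) \setminus L^1(\R^2)$ the Fourier transform $\widehat f$ is only an equivalence class in $L^2$, so the pointwise expression $\widehat f(\sigma\theta)$ entering the definition of $\Bo$ has no a priori meaning on the measure-zero set of lines through the origin. I would resolve this by interpreting $\Bo$ as the closure of the densely defined operator obtained from the formula on $L^1(\R^2)\cap L^2(\R^2)$: the polar identity above shows that this operator is bounded as a map into $L^2(\Sp^1\times\R)$ from the weighted space $L^2(\R^2;\norm{\xi}_2^{-1}\diff\xi)$, so it extends uniquely to $\dom(\Bo)$. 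Closedness of the extension (needed so that $\Bo^*$ is a well-defined adjoint with dense domain, justifying \ref{prop2-B}) is read off from continuity of the Fourier transform combined with the restriction map to concentric circles. Once this foundational issue is handled, all five items fall out cleanly from the single polar identity above.
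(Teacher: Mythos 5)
The paper offers no proof of this proposition at all --- it defers entirely to Smith, Solmon and Wagner \cite{smith1977practical} --- so your proposal cannot be matched against an argument in the text and must stand on its own. In outline it does: the polar identity $\int_{\Sp^1}\int_\R \abs{h(\sigma\theta)}^2\diff\sigma\diff\theta = 2\int_{\R^2}\abs{h(\xi)}^2\norm{\xi}_2^{-1}\diff\xi$ is the right engine for \ref{prop3-B}, \ref{prop4-B}, \ref{prop1-B} and \ref{prop2-B}, and your density, injectivity and unboundedness arguments are sound.

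There is, however, one genuine gap, in your treatment of \ref{prop3-B}. Your computation characterizes $\dom(\Bo)$ by the condition $\norm{\cdot}_2^{-1/2}\widehat f\in L^2(\R^2)$, and you reconcile this with the stated condition $\norm{\cdot}_2^{-1/2} f\in L^2(\R^2)$ by saying it is ``understood up to the Fourier transform via Plancherel.'' Plancherel does not do this: it preserves the unweighted $L^2$ norm, and a radial weight on the frequency side is \emph{not} equivalent to the same radial weight on the spatial side (under $\Fo$, multiplication by $\norm{\xi}_2^{-1}$ becomes a Riesz-potential-type convolution, not multiplication by $\norm{x}_2^{-1}$). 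The correct resolution is that the displayed formula $\Bo f(\theta,\sigma)=\widehat f(\sigma\theta)$ is inconsistent with the paper's own surrounding usage: since $\Ro=\Fo_2^{-1}\circ\Bo\circ\Fo$ and the Fourier slice theorem gives $\Fo_2\Ro f(\theta,\sigma)=\Fo f(\sigma\theta)$, the operator $\Bo$ must be the bare restriction $\Bo f(\theta,\sigma)=f(\sigma\theta)$ (this is also the only reading under which \ref{prop2-B} and the identity $\Bo^*\Bo\widehat f=\norm{\cdot}_2^{-1}\widehat f$ used in the proof of Lemma \ref{lem:V} make sense). With that reading your polar identity applies to $f$ itself and \ref{prop3-B} follows with no appeal to Plancherel. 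Two smaller points. First, the same $2$-to-$1$ covering that produces your factor $2$ in the norm identity produces two terms in the adjoint, namely $\norm{\xi}_2^{-1}\bigl(g(\xi/\norm{\xi}_2,\norm{\xi}_2)+g(-\xi/\norm{\xi}_2,-\norm{\xi}_2)\bigr)$; this collapses to the stated one-term formula only on even functions $g(\theta,\sigma)=g(-\theta,-\sigma)$, which is the symmetry enjoyed by $\ran(\Ro)$. As written, your adjoint derivation and your norm derivation use inconsistent bookkeeping for the covering, and you should fix one convention. Second, your closing claim that the closure of the operator defined on $L^1\cap L^2$ has the full weighted space as its domain needs one more sentence: truncating to spectral shells $\varepsilon\le\norm{\xi}_2\le R$ shows that $L^1\cap L^2$ is a core for the maximal operator, which is closed because it is unitarily equivalent to a multiplication operator on its maximal domain.
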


\begin{definition}[Radon transform on $L^2$]\label{def:R}
The operator $\Ro  \colon \dom(\Ro) \subseteq L^2(\R^2) \to L^2 (\Sp^1 \times \R)$ defined as composition
\begin{equation}\label{eq:R}
\Ro f \coloneqq  (\Fo_2^{-1} \circ  \Bo \circ  \Fo)  (f)
\end{equation}
is called the Radon transform on $L^2$.\\
\end{definition}

\begin{proposition}[Properties of $\Ro$]\label{prop:properties K} The Radon transform $\Ro$ satisfies the following properties:
\begin{enumerate}[label=(\alph*)]
\item\label{prop2-K} $\Ro$ is well-defined, linear, injective and unbounded.
\item\label{prop1-K} $\Ro$ is the closed extension of the operator defined by \eqref{eq:R0}.
\item\label{prop3-K} $\dom (\Ro) = \sparen{ \signal \in L^2 (\R^2) \mid \Fo \signal \in \dom (\Bo)} $.
\item\label{prop4-K} $\dom (\Ro^*) = \sparen{\data \in L^2 (\Sp^1 \times \R) \mid \Fo_2 g \in \dom (\Bo^*)}$.
\item\label{prop5-K} $\dom(\Ro)$ and $\dom(\Ro^*)$ are dense in $L^2$, respectively.
\item  $\Ro^* = \Fo^{-1} \circ \Bo^* \circ \Fo_2$.\\
\end{enumerate}
\end{proposition}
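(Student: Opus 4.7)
The overall strategy is to transfer each property from the corresponding property of $\Bo$ in Proposition \ref{prop:B} through the Fourier transforms $\Fo$ and $\Fo_2$, exploiting that both are unitary isomorphisms of the respective $L^2$-spaces. This reduces nearly every claim to a bookkeeping exercise.

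I would begin by establishing (c), since it underpins the rest. By the definition \eqref{eq:R}, $\Ro f$ is well-defined precisely when the chain $\Fo_2^{-1}\Bo\Fo f$ makes sense; since $\Fo$ is defined on all of $L^2(\R^2)$ and $\Fo_2^{-1}$ on all of $L^2(\Sp^1\times\R)$, this reduces to the requirement $\Fo f \in \dom(\Bo)$, giving (c). Then (e) follows immediately: $\dom(\Ro) = \Fo^{-1}(\dom(\Bo))$ is the image of a dense set under a unitary map, hence dense; the argument for $\dom(\Ro^*)$ is postponed until (f) is in place. For (a), linearity is clear from the composition, well-definedness is (c), and injectivity of $\Ro$ follows from injectivity of $\Bo$ (Proposition \ref{prop:B}\ref{prop1-B}) together with bijectivity of $\Fo$ and $\Fo_2^{-1}$. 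Unboundedness is a consequence of the unboundedness of $\Bo$ and the fact that pre- and post-composition with isometries preserves operator norms (bounded or infinite).

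For (b), I would verify two things. The identity $\Ro f = \Fo_2^{-1}\Bo\Fo f$ on $L^1(\R^2)\cap L^2(\R^2)$ is exactly the Fourier slice theorem \eqref{eq:fst} combined with the definition \eqref{eq:B} of $\Bo$, so $\Ro$ extends the classical Radon transform \eqref{eq:R0}. For closedness, I would use that $\Bo$ is closed: this follows from Proposition \ref{prop:B}\ref{prop3-B}--\ref{prop5-B} since $\Bo$ is densely defined with a densely defined adjoint (explicitly given in \ref{prop2-B}), hence equal to its double adjoint and closed. Because $\Fo$ and $\Fo_2^{-1}$ are everywhere-defined unitaries, the composition $\Fo_2^{-1}\Bo\Fo$ inherits closedness via a direct graph argument: if $f_n \to f$ and $\Ro f_n \to g$ in $L^2$, then $\Fo f_n \to \Fo f$ and $\Bo\Fo f_n = \Fo_2 \Ro f_n \to \Fo_2 g$, so closedness of $\Bo$ yields $\Fo f \in \dom(\Bo)$ and $\Bo\Fo f = \Fo_2 g$, i.e.\ $f \in \dom(\Ro)$ and $\Ro f = g$. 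That $\Ro$ is the closed extension (rather than just a closed extension) uses that $L^1(\R^2)\cap L^2(\R^2)$ is a core for $\Bo$ after pulling back by $\Fo$, which one checks by a standard cut-off/approximation argument in the Fourier domain.

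Finally, I would derive (f), from which (d) is then immediate by the same domain bookkeeping as for (c). Since $\Fo$ and $\Fo_2^{-1}$ are bounded and defined everywhere, the standard adjoint identity for compositions with bounded outer factors gives
\begin{equation*}
\Ro^* = (\Fo_2^{-1}\Bo\Fo)^* = \Fo^*\,\Bo^*\,(\Fo_2^{-1})^* = \Fo^{-1}\,\Bo^*\,\Fo_2,
\end{equation*}
using unitarity of the Fourier transforms. Then $\dom(\Ro^*) = \{g \in L^2(\Sp^1\times\R) \mid \Fo_2 g \in \dom(\Bo^*)\}$, which is (d), and density of this set in $L^2$ completes the postponed half of (e). The main obstacle I anticipate is (b): verifying that no larger closed extension exists, i.e.\ that the smooth/integrable functions form a core, which requires a mollification argument in the Fourier picture using Proposition \ref{prop:B}\ref{prop3-B}; everything else is a routine transport of properties through unitaries.
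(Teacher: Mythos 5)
The paper does not actually prove this proposition: Section~3.1 defers all proofs to \cite{smith1977practical}. So the only benchmark is the standard argument, and your strategy --- transporting each property of $\Bo$ from Proposition~\ref{prop:B} through the unitaries $\Fo$ and $\Fo_2$ --- is precisely that standard argument. Parts (a), (c), (d), (e) and (f) are handled correctly: the domain bookkeeping for the composition, the adjoint identity $(\Fo_2^{-1}\Bo\Fo)^* = \Fo^{-1}\Bo^*\Fo_2$ (legitimate because both outer factors are unitary and everywhere defined), and the graph argument transferring closedness from $\Bo$ to $\Ro$ are all sound.

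There is, however, one step whose justification is genuinely wrong. You argue that $\Bo$ is closed because it is densely defined with densely defined adjoint, ``hence equal to its double adjoint.'' Those hypotheses only give \emph{closability}, i.e.\ $\Bo^{**} = \overline{\Bo}$, which may be a proper extension of $\Bo$; restricting any closed unbounded operator to a dense core gives a counterexample to the implication you invoke. The conclusion is nevertheless true for the maximal domain in Proposition~\ref{prop:B}\ref{prop3-B}, but it must be checked directly: by the polar-coordinate identity $\norm{\Bo h}_{L^2(\Sp^1\times\R)}^2 = 2\int_{\R^2}\abs{h(\xi)}^2\norm{\xi}_2^{-1}\diff\xi$, the operator $\Bo$ is (up to a constant) an isometry from the weighted space $L^2(\norm{\xi}_2^{-1}\diff\xi)$ into $L^2(\Sp^1\times\R)$, so if $f_n\to f$ in $L^2(\R^2)$ and $\Bo f_n\to g$, then $(f_n)$ is Cauchy in the weighted norm, its weighted limit is identified with $f$ via an a.e.\ convergent subsequence, whence $f\in\dom(\Bo)$ and $\Bo f=g$. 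The other soft spot is part (b): you correctly isolate the key point --- that the classical operator \eqref{eq:R0} has $\Ro$ as its closure, not merely as \emph{a} closed extension, i.e.\ that $L^1\cap L^2$ contains a core --- but you only assert the approximation argument. To complete it, note that the graph norm of $\Ro$ is equivalent to $\int_{\R^2}\abs{\widehat f(\xi)}^2(1+\norm{\xi}_2^{-1})\diff\xi$, that $C_c^\infty(\R^2\setminus\sparen{0})$ is dense in this weighted $L^2$ space (the weight is locally integrable in two dimensions), and that functions with such Fourier transforms are Schwartz, hence lie in $L^1(\R^2)\cap L^2(\R^2)$. With these two repairs your proof is complete. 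As a side remark, the definition \eqref{eq:B} carries a hat on $f$ that is inconsistent with Definition~\ref{def:R} and with Proposition~\ref{prop:B}\ref{prop3-B}; you implicitly (and correctly) read $\Bo$ as the plain slicing operator, which is the reading the rest of the paper requires.
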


Proposition \ref{prop:properties K} in particular states that the Radon transform $\Ro$ is an unbounded operator. On the other hand, the restriction of $\Ro$ to various closed subspaces is bounded. The most common case is the restriction of $\Ro$ to $L_D^2 (\R^2) \coloneqq \left\{ f \in L^2 (\R^2) \colon \supp (f) \subseteq D \right\}$. However, the restriction to functions with compact support  poses a-priori assumptions that  are not translation invariant. Further, with the above considerations we obtain the classical filtered backprojection formula (FBP) \cite{natterer} between $L^2$-spaces, i.e.
\begin{equation}\label{eq:fbp}
\Ro^{-1} \data = \frac{1}{4\pi}\paren{\Ro^* \circ \Io_1} \paren{\data},
\end{equation}
where $\data = \Ro \signal$ and $\signal \in \dom(\Ro)$ \cite{SOLMON197661, smith1977practical}.
Here, the operator $\Io_1\signal \coloneqq \Fo^{-1} \paren{\abs{\cdot}  \Fo  \signal}$ is known as  Riesz potential.

\subsection{Necessary conditions}
\label{sec:ti-dfd for radon}

Before constructing a TI-DFD for the Radon transform on $L^2(\R^2)$ we derive some necessary conditions.\\

\begin{lemma}[Necessary Conditions]\label{lem:V}
Let $u_\la \in L^2(\R^2)$, $v_\la \in L^2(\Sp^1 \times \R)$ and $\kappa_\la >0$   satisfy $\Ro^* v_\la^* = \kappa_\la u_\la^*$ and $\hat v_\la, \abs{\cdot}^{-1} \hat v_\la  \in L^\infty(\Sp^1 \times \R)$, and define $ \Vo_\la^* g \coloneqq \Ro^* (v_\la^* \ast g)$.
\begin{enumerate}
\item\label{lem:V1} $(\Vo_\la)_{\la\in\La}$ satisfies \ref{ti3}.
\item\label{lem:V2} If $(u_\la, \Vo_\la^*, \kappa_\la)_{\la\in\La}$ is a TI-DFD for $\Ro$, then
\begin{equation}\label{eq:necessary condition}
\forall \la\in\La \colon \quad \widehat v_\la (\theta, \sigma) = \kappa_\la \abs{\sigma} \widehat u_\la (\sigma \theta).
\end{equation}
\end{enumerate}
\end{lemma}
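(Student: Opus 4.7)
The overall strategy is to pass to the Fourier side and reduce both assertions to algebraic identities. I would rely on the Fourier representation $\Fo \Ro^* = \Bo^* \Fo_2$ from Proposition \ref{prop:properties K}, together with the Fourier Slice theorem encoded in $\Bo \hat f(\theta, \sigma) = \hat f(\sigma\theta)$. The convolution $v_\la^* \ast g$ on $\Sp^1 \times \R$ I interpret as convolution in the second variable only, so that $\Fo_2(v_\la^* \ast g) = \overline{\hat v_\la}\cdot \Fo_2 g$; the standing hypothesis $\hat v_\la \in L^\infty$ is precisely what makes this well-defined on $L^2$.

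For part \ref{lem:V1}, I would unwind $\Vo_\la^*(\Ro f) = \Ro^*(v_\la^* \ast \Ro f)$ and take the 2D Fourier transform of both sides. Combining the convolution theorem with the Fourier Slice theorem and the symbol $\Bo^* g(\xi) = \abs{\xi}^{-1} g(\xi/\abs{\xi}, \abs{\xi})$ collapses the multiplications to
\begin{equation*}
\Fo\bigl[\Vo_\la^*(\Ro f)\bigr](\xi) = \abs{\xi}^{-1} \overline{\hat v_\la\paren{\xi/\abs{\xi}, \abs{\xi}}}\, \hat f(\xi).
\end{equation*}
The prefactor equals $\Fo(\Ro^* v_\la^*)(\xi)$, so the hypothesis $\Ro^* v_\la^* = \kappa_\la u_\la^*$ identifies the right-hand side with $\Fo(\kappa_\la u_\la^* \ast f)(\xi)$, and inverting $\Fo$ yields \ref{ti3}. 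For part \ref{lem:V2}, I would reverse this calculation: \ref{ti3} forces
\begin{equation*}
\paren{\abs{\xi}^{-1} \overline{\hat v_\la\paren{\xi/\abs{\xi}, \abs{\xi}}} - \kappa_\la \overline{\hat u_\la(\xi)}} \hat f(\xi) = 0
\end{equation*}
for every $f \in \dom(\Ro)$ and almost every $\xi$. Since $\dom(\Ro)$ is dense in $L^2(\R^2)$ by Proposition \ref{prop:properties K}(e), Plancherel makes $\{\hat f : f \in \dom(\Ro)\}$ dense as well, and a standard localization argument (choosing $\hat f$ supported in a small ball around a prescribed $\xi_0$ and non-vanishing there) cancels the factor $\hat f$. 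Conjugating and writing $\xi = \sigma\theta$ with $\sigma > 0$ then gives $\hat v_\la(\theta, \sigma) = \kappa_\la \abs{\sigma} \hat u_\la(\sigma\theta)$; the extension to $\sigma \le 0$ follows from the symmetry $(\theta, s) \sim (-\theta, -s)$ of the Radon parametrization, which enforces $\hat v_\la(-\theta, -\sigma) = \hat v_\la(\theta, \sigma)$.

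The main obstacle I anticipate is the rigorous justification of the $\hat f$ cancellation in part \ref{lem:V2} together with the domain bookkeeping caused by the unboundedness of $\Ro$: every formal manipulation has to be carried out inside $L^2$. The $L^\infty$ hypotheses on $\hat v_\la$ and $\abs{\cdot}^{-1} \hat v_\la$ are exactly what guarantee that the multipliers appearing on the Fourier side lie in $L^\infty$, making both the direct computation in \ref{lem:V1} and the pointwise extraction in \ref{lem:V2} tractable.
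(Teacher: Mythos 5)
Your proposal follows essentially the same route as the paper: both pass to the Fourier side via $\Ro^* = \Fo^{-1}\circ\Bo^*\circ\Fo_2$, use the Fourier Slice theorem and the symbol $\Bo^* g(\xi)=\norm{\xi}_2^{-1}g(\xi/\norm{\xi}_2,\norm{\xi}_2)$ to collapse $\Vo_\la^*(\Ro f)$ to a Fourier multiplier equal to $\Fo(\Ro^* v_\la^*)\cdot\hat f$, and then invoke the hypothesis $\Ro^* v_\la^*=\kappa_\la u_\la^*$ for part (1) and density of $\dom(\Ro)$ to cancel $\hat f$ for part (2). If anything you are slightly more explicit than the paper about the localization argument and the extension of \eqref{eq:necessary condition} to $\sigma\le 0$ via the evenness of the Radon parametrization, which the paper leaves implicit.
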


\begin{proof}\mbox{}
\ref{lem:V1})
Let $\signal \in \dom \paren{\Ro}$. Then  
\begin{align*}
\kappa_\la  \paren{u_\la^* \ast \signal} 
& = \Fo^{-1} \paren{\kappa_\la \widehat u_\la^* \cdot  \widehat \signal}  
\\& = 
\Fo^{-1} \paren{\Bo^* \widehat v_\la^* \cdot \norm{\cdot}_2 \Bo^* \Bo \widehat f}
\\ & = 
\Fo^{-1} \Bo^* \paren{\widehat v_\la^* \cdot \widehat{\Ro \signal}} 
\\& = 
\Fo^{-1}  \Bo^* \Fo_2 \paren{v_\la^* \ast_s \Ro \signal}
\\ & = 
\Ro^* \paren{v_\la^* \ast_s \Ro \signal} 
\\ &= 
\Vo_\la^\ast (\Ro \signal)\,. 
\end{align*}

\ref{lem:V2})
According to \ref{lem:V1}) and the definition of $\Vo_\la^*$ we have $\Ro^* (v_\la^* \ast \Ro \signal) = \kappa_\la (u_\la^* \ast \signal)$. By applying the Fourier transform on both sides and since $\dom(\Ro)$ is dense, we obtain \eqref{eq:necessary condition}.
\end{proof}

\subsection{Construction of the TI-DFD}
\label{sec:ti-dfd for radon}

Now, let $\paren{u_\la}_{\la\in\La}$ be a 2D (tensor product) TI wavelet frame with mother wavelet $ u\in L^2(\R^2)$. That is, we assume the multi-scale structure
\begin{equation}\label{eq:u}
	\forall (j, l) \in \La \coloneqq \Z \times \sparen{\mathrm{H}, \mathrm{V}, \mathrm{D}} \colon \; u_{j,l} (x) = 2^j u_{0, l} (2^jx)\,,
\end{equation}
where $j\in\Z$ is the scale index and $l \in  \sparen{\mathrm{H}, \mathrm{V}, \mathrm{D}}$ indicates the horizontal, vertical or diagonal mother wavelet, respectively.\\

\begin{theorem}[TI-WVD for $\Ro$]\label{theo:tifd for radon}
Let $(u_{j,l})_{j,l\in\La}$  be defined by \eqref{eq:u}, suppose $\supp ( \widehat u_{0,l} ) = \sparen{\xi \mid a \leq \norm{\xi}_2 \leq b}$ for some $a,b>0$, and for  $(j, l) \in \Z \times  \sparen{\mathrm{H}, \mathrm{V}, \mathrm{D}}$ define
\begin{align}
v_{j,l}   &\coloneqq 2^{-j/2}  \Io_1 \Ro u_{j,l}   \\
\Vo_{j,l}^* (g)  &\coloneqq  \Ro^* (v_{j,l}^* \ast g) \,.
\end{align}
Then the system $\paren{u_{j,l}, \Vo_{j,l}^*, 2^{-j/2} }_{j,l\in\La}$  defines a TI-DFD for $\Ro$, which we will call TI-WVD for the Radon transform.
\end{theorem}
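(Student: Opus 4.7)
The plan is to verify the three defining properties \ref{ti1}--\ref{ti3} of a TI-DFD in turn, using the Fourier slice theorem \eqref{eq:fst}, the explicit formula for $\Bo^*$ from Proposition \ref{prop:B}\ref{prop2-B}, and the necessary-conditions lemma (Lemma \ref{lem:V}) as the backbone.

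First, \ref{ti1} is immediate: $(u_{j,l})_{j,l\in\La}$ is precisely the 2D tensor-product TI wavelet frame chosen at the start of Section~\ref{sec:ti-dfd for radon}, so it is a TI-frame for $L^2(\R^2)$ by construction.

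Next, I would prove \ref{ti3} by invoking part \ref{lem:V1} of Lemma \ref{lem:V} with $v_\la = v_{j,l} = 2^{-j/2} \Io_1 \Ro u_{j,l}$ and $\kappa_\la = 2^{-j/2}$. To apply the lemma, I must verify its three hypotheses. The relation $\Ro^\ast v_{j,l}^\ast = 2^{-j/2} u_{j,l}^\ast$ is a direct Fourier-side calculation: using \eqref{eq:fst}, $\Fo_2 v_{j,l}(\theta,\sigma) = 2^{-j/2}\abs{\sigma}\hat u_{j,l}(\sigma\theta)$; then conjugation (which realises the $*$ operation in the $s$-variable), followed by $\Bo^\ast$ from Proposition \ref{prop:B}\ref{prop2-B}, eliminates the factor $\abs{\sigma}/\|\xi\|_2 = 1$ and returns $2^{-j/2}\overline{\hat u_{j,l}(\xi)}$; finally $\Fo^{-1}$ yields $2^{-j/2}u_{j,l}^\ast$. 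The remaining hypotheses $\hat v_{j,l}\in L^\infty$ and $\abs{\cdot}^{-1}\hat v_{j,l}\in L^\infty$ follow from $\hat v_{j,l}(\theta,\sigma) = 2^{-j/2}\abs{\sigma}\hat u_{j,l}(\sigma\theta)$ combined with the annular support assumption $\supp(\hat u_{0,l})\subseteq\{a\leq\|\xi\|_2\leq b\}$, which confines $\abs{\sigma}$ to $[2^j a, 2^j b]$ on the support of $\hat v_{j,l}$ and also ensures $v_{j,l}\in L^2(\Sp^1\times\R)$.

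For \ref{ti2} I would first derive the workhorse identity
\begin{equation*}
\widehat{\Vo_{j,l}^\ast g}(\xi) \;=\; 2^{-j/2}\,\overline{\hat u_{j,l}(\xi)}\,\hat g\bigl(\xi/\|\xi\|_2,\|\xi\|_2\bigr),
\end{equation*}
obtained by chasing $g\mapsto v_{j,l}^*\ast g\mapsto \Ro^\ast(v_{j,l}^*\ast g)$ through $\Fo_2$ and $\Bo^\ast$. Boundedness $\Vo_{j,l}^\ast \in B(L^2(\Sp^1\times\R), L^2(\R^2))$ then follows by Plancherel, switching to polar coordinates, and using that the weight $r\cdot 2^{-j}\abs{\hat u_{j,l}(r\omega)}^2$ is uniformly bounded in $r,\omega$ thanks to the annular support. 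For the two-sided frame inequality, I would use \ref{ti3} to rewrite $\sum_{j,l}\|\Vo_{j,l}^\ast\Ro f\|_2^2 = \sum_{j,l}2^{-j}\|u_{j,l}^\ast\ast f\|_2^2$, then expand via Plancherel to
\begin{equation*}
\sum_{j,l}\|\Vo_{j,l}^\ast\Ro f\|_2^2 \;\asymp\; \int_{\R^2} \Phi(\xi)\,\abs{\hat f(\xi)}^2\,d\xi,\qquad \Phi(\xi)\coloneqq\sum_{j,l} 2^{-j}\abs{\hat u_{j,l}(\xi)}^2.
\end{equation*}
Separately, the Fourier slice theorem in polar coordinates gives $\|\Ro f\|_2^2\asymp \int\abs{\hat f(\xi)}^2\|\xi\|_2^{-1}d\xi$. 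Hence the desired equivalence on $\ran(\Ro)$ (and, by the boundedness of each $\Vo_{j,l}^\ast$ together with density, on $\overline{\ran(\Ro)}$) reduces to the weighted Littlewood--Paley identity $\Phi(\xi)\asymp \|\xi\|_2^{-1}$. This is established by inserting $\hat u_{j,l}(\xi) = 2^{-j}\hat u_{0,l}(2^{-j}\xi)$ and exploiting that the annular support of $\hat u_{0,l}$ restricts the sum, at each $\xi$, to $O(1)$ scales $j$ with $2^j\asymp\|\xi\|_2$, producing the correct $\|\xi\|_2^{-1}$ behaviour from the $2^{-j}$ weighting combined with the dilation factor.

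The main obstacle is this last step: getting the weighted Littlewood--Paley identity $\Phi(\xi)\asymp \|\xi\|_2^{-1}$ correctly, since it is where the Riesz-potential-induced factor $2^{-j/2}$ in $v_{j,l}$, the wavelet dilation normalisation in \eqref{eq:u}, and the $\|\xi\|_2^{-1}$ weight from the Radon transform's mapping properties must all conspire. Once this is in place, \ref{ti2} follows and the three properties together establish that $(u_{j,l},\Vo_{j,l}^\ast,2^{-j/2})_{j,l\in\La}$ is a TI-DFD for $\Ro$.
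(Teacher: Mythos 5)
Your proposal is correct and follows essentially the same route as the paper: (TI1) holds by assumption, (TI3) via Lemma \ref{lem:V}, and (TI2) by a Plancherel/polar-coordinate computation in which the annular support of $\widehat u_{0,l}$ traps $\sigma 2^{-j}$ (equivalently forces $2^j \asymp \norm{\xi}_2$) between $a$ and $b$, after which the TI-frame bounds finish the argument. The only cosmetic difference is that you organize the (TI2) estimate on the signal side as a weighted Littlewood--Paley bound $\Phi(\xi) \asymp \norm{\xi}_2^{-1}$ compared against $\norm{\Ro \signal}_2^2$, whereas the paper computes $\norm{\Vo_{j,l}^*\data}_2^2$ directly against $\norm{\data}_2^2$ in the data domain; the underlying mechanism is identical.
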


\begin{proof}
For the proof it remains to verify that $(\Vo_{j,l}^*)_{j,l}$ satisfies \ref{ti2}. Let $\data \in \ran (\Ro)$, then by the Parseval identity and changing to polar coordinates inside the integral, we have
\begin{align*}
\norm{\Vo_{j,l}^* \data}_2^2
& = \norm{\Fo \paren{\Vo_{j,l}^* \data}}_2^2 \\
& = \int_{\Sp^1}\int_0^\infty \frac{1}{\sigma} \abs{ \widehat v_{j,l}^* (\theta, \sigma) }^2 \cdot \abs{\widehat \data (\theta, \sigma)}^2 \diff \sigma \diff \theta\\
& = \int_{\Sp^1}\int_0^\infty \frac{\sigma}{2^j} \abs{ \widehat u_{j,l} (\sigma \theta) }^2 \cdot \abs{\widehat \data (\theta, \sigma)}^2 \diff \sigma \diff \theta.
\end{align*}
Since $a 2^j \leq \sigma \leq b 2^j$ on the support of the integrand and since $(u_{j,l})_{j,l}$ is a TI frame, taking the sum on both sides yields $aA \leq \sum_{j,l} ||\Vo_{j,l}^* \data ||_2^2 \leq bB$, where $A,B>0$ are the TI frame constants of $(u_{j,l})_{j,l}$.
\end{proof} 

\section{Numerical Experiments}
\label{sec:numerical experiments}

In this section, we present numerical comparisons between the classical WVD \cite{DONOHO1995101, ebner2020regularization} and the TI-WVD for the Radon transform, introduced in this article. The essential step in implementing the filtered TI-DFD reconstruction formula \eqref{eq:ti-filt} is calculating $\Vo_{j,l}^* \data = \Ro^* \paren{v_{j,l}^* \ast \data}$. To this end, we use the explicit expression \eqref{eq:necessary condition} and calculate
\begin{equation*}
\Ro^*\paren{v_{j,l}^* \ast \data} = \kappa_{j,l} \paren{u_{j,l}^* \ast \Ro^* \Io_1\data}.
\end{equation*}
Note that $\Ro^* \Io_1$ is the filtered backprojection (FBP) as given in \eqref{eq:fbp}. Together with the the Tikhonov filter $\regfilt_\alpha (\kappa) \coloneqq \kappa/(\kappa^2 + \alpha)$, the regularized TI-DFD \eqref{eq:ti-filt} for the Radon transform is given as
\begin{align*}
\Ao_\alpha^\Phi \data & = \sum_{j,l} \frac{2^{-j}}{2^{-j}+\alpha} u_{j,l} \ast \paren{u_{j,l}^* \ast \Ro^* \Io_1g}.
\end{align*}
Since the wavelet transform and the FBP can be efficiently implemented, this results in an efficient algorithm of the regularized reconstruction. For the implementation, we used Python 3.9.12. The Radon  transform and in particular the FBP where implemented via the scikit-image package, version 0.19.2 \cite{scikit-image}. The (TI) wavelet transform was employed via the PyWavelets package, version 1.3.0 \cite{lee2019pywavelets}.\\

We use a discretized synthetic phantom $\signal \in \R^{256 \times  256}$ and  chose the maximum of $8$ decomposition levels using the Haar-wavelet as underlying frame. Note that the Haar-wavelet is not band-limited, and thus Theorem \ref{theo:tifd for radon} not applicable in this case. However, we expect the result of the theorem to hold under weaker assumptions, but we do not yet have proof of this. We added white Gaussian noise to the data $\data^\delta = \Ro \signal + \delta\eta$. Here, we chose $\delta = 0.05$ and $\eta \sim \norm{\Ro \signal}_\infty \mathcal{N}(0,1)$. To guarantee a fair comparison, we performed a parameter search to determine the optimal regularization parameter $\alpha>0$ for both methods. The parameter was optimized in terms of the relative $\ell^2$ reconstruction error $\norm{\signal_{\text{rec}}^\alpha - \signal}_2/\norm{\signal}_2$, where $\signal$ is the ground truth and $\signal_{\text{rec}}^\alpha$ is the reconstruction, depending on the selected parameter.\\

Numerical results are shown in Figure \ref{fig:wvd example}, which  clearly show that the TI-DFD approach outperforms the standard WVD. While the denoising property of both methods is evident, the decimated DFD suffers from the well-known block like artifacts which are due to the sub sampling step in the decimated wavelet decomposition.  Quantitatively, the relative $\ell^2$ reconstruction error for the WVD amounts to 0.054 and for the TI-WVD to 0.048.

\begin{figure}[htb!]
    \centering
    \subfloat[]{\includegraphics[width=0.3\textwidth]{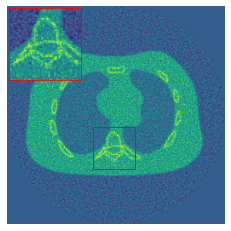}
        \label{fig:fbp}}
        \subfloat[]{\includegraphics[width=0.3\textwidth]{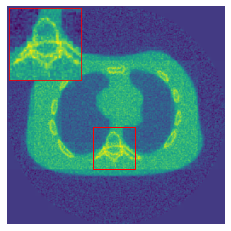}
        \label{fig:non ti frec haar}}
     	\subfloat[]{\includegraphics[width=0.3\textwidth]{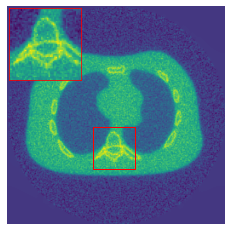}
        \label{fig:ti frec haar}}
\caption{Reconstructions from noisy Radon data $\data^\delta$ with $\delta=0.05$. \textbf{(a)} Filtered backprojection reconstruction using \eqref{eq:fbp}. \textbf{(b)},\textbf{(c)} Reconstructions via WVD and TI-WVD, respectively. Both reconstructions use the maximum number of $8$ levels of decomposition for the Haar-wavelet. In \textbf{(b)} the block like artifacts for the WVD are clearly visible in the magnified section.}
\label{fig:wvd example}
\end{figure}

\section{Conclusion}
In this article we presented  the concept of the translation invariant frame decomposition (TI-DFD) for the solution of linear operator equations. Subsequently, we constructed a TI wavelet-vagulette decomposition  (TI-WVS)  for the Radon transform as an instance of the TI-DFD. An advantage of classical frame decompositions is the translation invariance of the system which also has been demonstrated numerically. We have seen, that the use  of translation invariant frames leads to improved reconstructions when compared to classical frames.

\section*{Acknowledgments}

The contribution by S.\, G. is part of a project that has received funding from the European Union’s Horizon 2020 research and innovation program under the Marie Sk\l{}odowska-Curie grant agreement No 847476. The views and opinions expressed herein do not necessarily reflect those of the European Commission.

\end{document}